\documentclass{amsart}
\usepackage{amsthm,amsfonts,amsmath,amscd,amssymb,latexsym,epsfig}


\newcommand{\cx}{{\mathbb C}}

\newcommand{\tr}{\operatorname{tr}}

\newcommand{\im}{\operatorname{Im}}

\newcommand{\End}{\operatorname{End}}

\newcommand{\Ker}{\operatorname{Ker}}

\newcommand{\Mat}{\operatorname{Mat}}

\numberwithin{equation}{section}

\newtheorem{theorem}{Theorem}[section]

\newtheorem{lemma}[theorem]{Lemma}

\newtheorem{corollary}[theorem]{Corollary}

\newtheorem{proposition}[theorem]{Proposition}

\theoremstyle{remark}

\newtheorem{remark}[theorem]{Remark}

\newtheorem{definition}[theorem]{Definition}

\newtheorem{example}[theorem]{Example}

\newcommand{\oC}{{\mathbb{C}}}

\newcommand{\oP}{{\mathbb{P}}}

\newcommand{\oR}{{\mathbb{R}}}




\newcommand{\sJ}{{\mathcal{J}}}


\newcommand{\sO}{{\mathcal{O}}}

\newcommand{\sV}{{\mathcal{V}}}

\begin{document}

\title{Jumps, folds and  hypercomplex structures}
\author{Roger Bielawski* \and Carolin Peternell}
\address{Institut f\"ur Differentialgeometrie,
Leibniz Universit\"at Hannover,
Welfengarten 1, 30167 Hannover, Germany}
\email{bielawski@math.uni-hannover.de}

\thanks{Both authors are members of, and the second author is fully supported by the DFG Priority
Programme 2026 ``Geometry at infinity".}


\begin{abstract} 
We investigate the geometry of the Kodaira moduli space $M$ of sections of $\pi:Z\to \oP^1$, the normal bundle of which is allowed to jump from $\sO(1)^{n}$ to $\sO(1)^{n-2m}\oplus\sO(2)^{m}\oplus\sO^{m}$. In particular,  we identify the natural assumptions which guarantee that the Obata connection of the hypercomplex part of $M$ extends to a logarithmic connection on $M$.
\end{abstract}

\subjclass{53C26, 53C28}

\maketitle

\thispagestyle{empty}

\section{Introduction}

It is well known that a hyperk\"ahler or a hypercomplex structure on a smooth manifold $M$ can be encoded in the {\em twistor space}, which is a complex manifold $Z$ fibring over $\oP^1$ and equipped with an antiholomorphic involution $\sigma$ covering the antipodal map. The manifold $M$ is recovered as the parameter space of $\sigma$-invariant sections with normal bundle isomorphic to $\sO(1)^{\oplus n}$ ($n=\dim_\cx M$). If we start with an arbitrary                                                                                                                                                                                                                                                                                                                complex manifold $Z$ equipped with a holomorphic submersion $\pi:Z\to \oP^1$ and an involution $\sigma$, then the corresponding component  of the Kodaira moduli space of sections of $\pi$ will typically also contain sections with other normal bundles $\bigoplus_{i=1}^n\sO(k_i)$. This kind of jumping normal bundle  attracted recently attention in the case of $4$-dimensional hyperk\"ahler manifolds \cite{Hit2,Biq,Dun}, in the context of a phenomenon known as {\em folding} (one speaks then of {\em folded hyperk\"ahler metrics}).
\par
Folded hyperk\"ahler structures do not exhaust all geometric possibilities which arise when the normal bundle is allowed to jump. Even in four dimensions there are examples which are not folded (Example \ref{proj} below). The aim of this paper is to investigate the natural extension of the hypercomplex geometry arising on such manifolds of sections (folded or not). More precisely, we are interested in the differential geometry of the (smooth) parameter space $ M$ of sections of $\pi:Z\to \oP^1$ with normal bundle $N$ isomorphic $\bigoplus_{i=1}^n\sO(k_i)$, where  each $k_i\geq 0$. We shall discuss only the purely holomorphic case, i.e. we are interested  in all sections, not just $\sigma$-invariant. Choosing an appropriate $\sigma$ allows one to carry over all results to  hypercomplex or split hypercomplex manifolds.
\par
Our particular object of interest is the (holomorphic) {\em Obata connection} $\nabla$, i.e. the unique torsion free connection preserving the hypercomplex (or, rather, the biquaternionic, i.e. complexified hypercomplex) structure. This is defined on the open subset $U$ of $M$ corresponding to the sections with normal bundle isomorphic to $\sO(1)^{\oplus n}$. The general twistor machinery (see, e.g. \cite{BE}) implies that $\nabla$ extends to a first order differential operator $D$ on sections of certain vector bundle defined over all of $M$. 
Our point of view is to regard $D$ as a particular type of meromorphic connection with polar set $\Delta=M\backslash U$. In general,  this {\em meromorphic Obata connection}  can have higher order poles along $\Delta$. We show, however, that in the case when $M$ arises from a (partial) compactification of the twistor space of a hypercomplex manifold, the meromorphic Obata connection has a simple pole, and in fact it is then a logarithmic connection.

\section{Geometry of jumps}

\subsection{Two examples}
We begin with two examples illustrating the different geometric possibilities occuring when the normal bundle of a twistor line jumps. The first example is the basic example of a {\em hyperk\"ahler fold}, as explained by Hitchin \cite{Hit2}.

\begin{example} (Calabi-Eguchi-Hanson gravitational instanton)  The twistor space $Z$ of the Calabi-Eguchi-Hanson metric is the resolution of the variety
$$\{(x,y,z)\in \sO(2)\oplus\sO(2)\oplus\sO(2); xy=(z-p_1)(z-p_2)\},$$
where $p_i(\zeta)=a_i\zeta^2+2b_i\zeta+c_i$, $i=1,2$, are two fixed sections of $\sO(2)$. The sections of the projection $Z\to \oP^1$ can be described as follows \cite{Hit1}: let $z=a\zeta^2+2b\zeta+c$ be a section of $\sO(2)$ and let $\alpha_i,\beta_i$ be the roots of $z-p_i$, $i=1,2$. Then the sections are given by
$$  z=a\zeta^2+2b\zeta+c, \enskip x=A(\zeta-\alpha_1)(\zeta-\alpha_2),\enskip y=B(\zeta-\beta_1)(\zeta-\beta_2),$$
where $AB=(a-a_1)(a-a_2)$.
A computation by Hitchin in \cite{Hit1} determines the splitting type of the normal bundle and can be interpreted as follows. Elements $\tau$ of $GL_2(\cx)$ with $\det\tau=-1$ and $\tr\tau =0$ satisfy $\tau^2=1$. For any pair $p_1,p_2$ of quadratic polynomials there exists such an $\tau$ exchanging $p_1$ and $p_2$, which, consequently,  acts on $Z$.
The normal bundle of a $\tau$-invariant section splits as $\sO(2)\oplus \sO$; otherwise as  $\sO(1)\oplus\sO(1)$.
\par
To see this directly, observe that modulo translations and the action of $GL_2(\cx)$, $p_1(\zeta)=\zeta$ and $p_2(\zeta)=-\zeta$. The involution $\tau$ is then simply $\zeta\mapsto -\zeta$, and since the normal bundle $N$ of an invariant section satisfies $\tau^\ast N=N$, it must split into line bundles of even degrees. The $\tau$-invariant sections of $Z$ are given by
$z=a\zeta^2 +c$ and by
$$ x=A\left(\zeta+\frac{1+\sqrt{ac}}{a}\right)\left(\zeta-\frac{1-\sqrt{ac}}{a}\right), \enskip y=B\left(\zeta-\frac{1+\sqrt{ac}}{a}\right)\left(\zeta +\frac{1-\sqrt{ac}}{a}\right),$$
where $AB=a^2$. Consequently, for every $\zeta\neq 0,\infty$, the map given by intersecting a section with the fibre $\pi^{-1}(\zeta)$ remains surjective when restricted to sections with normal bundle $\sO(2)\oplus \sO$.
\label{CEH}\end{example}

\begin{example} ($\oR \rm P^4$) Let $Z=\oP\bigl(\sO(1)\oplus\sO(1)\oplus\sO\bigr)$ be the compactification of the twistor space of $\oR^4$. Sections are described as projective equivalence classes:
$$\zeta\mapsto [a_1\zeta+b_1,a_2\zeta+b_2,c],\enskip a_i,b_i,c\in \cx,\enskip c\neq 0\implies \det\begin{pmatrix} a_1 & b_1\\a_2 & b_2\end{pmatrix}\neq 0.$$
The normal bundle of such a section is $\sO(1)\oplus\sO(1)$ if $c\neq 0$, and $\sO(2)\oplus \sO$ if $c=0$. The manifold of all sections is an open subset of $\cx\rm P^4$, and the manifold of real sections, i.e. satisfying $b_1=-\bar a_2$, $b_2=\bar a_1$, $c=\bar c$, is $\oR \rm P^4$.  
\par
Observe that the twistor lines with normal bundle $\sO(2)\oplus \sO$ are all contained in their own minitwistor space $\oP\bigl(\sO(1)\oplus\sO(1)\oplus 0\bigr)\simeq \oP^1\times \oP^1$.
\label{proj}\end{example}

From the point of view of hyperk\"ahler geometry, the difference between the two examples is clear: in the first case, there is a well defined $\sO(2)$-valued symplectic form along the fibres of $Z$. In the second example, this is not the case.  The second example does not fit into Hitchin's theory of folded hyperk\"ahler manifolds: the $3$-dimensional submanifold of real twistor lines with normal bundle $\sO(2)\oplus \sO$ is $\oR \rm P^3$, so it is not even a contact manifold.
\par
Our aim now is to investigate both the common features and the differences in the behaviour of the hypercomplex structure and of the Levi-Civita (i.e. Obata) connection.

\subsection{$2$-Kronecker structures\label{Kr}}
Let $Z$ be a complex manifold of dimension $n+1$ and $\pi:Z\to \oP^1$ a surjective holomorphic submersion. We are interested in the (necessarily smooth) parameter space $M$ of sections of $\pi$ with normal bundle $N$ isomorphic to $\bigoplus_{i=1}^n\sO(k_i)$, where $k_i\in\{0,1,2\}$ and $n=\sum k_i$. Its dimension (as long as it is nonempty) is $2n$ and we consider a connected component $M$ which contains a section with normal bundle isomorphic to $\sO(1)^{\oplus n}$. 
\par
The tangent space $T_m M$ at any $m\in M$ is canonically isomorphic to $H^0(s_m,N)$, where $s_m$ is the section corresponding to $m$. Similarly, we have a rank $n$ bundle $E$ over $M$, the fibre of which is $H^0(s_m,N(-1))$, where $N(-1)=N\otimes \pi^\ast\sO_{\oP^1}(-1)$.  The multiplication  map $H^0(N(-1))\otimes H^0(\sO(1))\to H^0(N)$ induces a homomorphism
$$ \alpha:E\otimes \cx^2\to TM,$$
which is an isomorphism at any $m$ with $N_{s_m/Z}\simeq \sO(1)^n$. It follows that the subset of $M$ consisting of sections with other normal bundles is a divisor $\Delta$ in $M$. We shall assume throughout that the set of singular points of $\Delta$ has codimension $2$ in $M$ (in particular $\Delta$ is reduced). This means that the normal bundle of a section corresponding to a smooth point of $\Delta$ is isomorphic to $\sO(1)^{n-2}\oplus \sO(2)\oplus\sO$. 
\par
Observe also that $\alpha$ is injective on each subbundle of the form $E\otimes v$, where $v$ is a fixed nonzero vector in $\cx^2$. The image $D_v$ of the subbundle $E\otimes v$ is an integrable distribution on $TM$ (sections of $\pi$ vanishing at the zero of $v\in H^0(\sO(1))$) and we recover $Z$ as the space of leaves of the distribution $D$ on $M\times \oP^1$ given by $D|_{M\times [v]}=D_v$.
\begin{remark} We can also define $E$ as the kernel of the evaluation map $H^0(N)\otimes\sO_{\oP^1}\to N$ (which is what we do in \cite{BP1}), i.e.
$$0\to E_m\otimes \sO_{\oP^1}(-1) \stackrel{A}{\longrightarrow} H^0(N)\otimes\sO_{\oP^1}\longrightarrow N\to 0.$$
We obtain again a map $\alpha:E\otimes \cx^2\to TM$ by restricting $A$ to each subspace of the form $E\otimes\langle v\rangle$, $v\in \cx^2$. But then the above sequence identifies $H^0(N(-1))$ with $E_m\otimes H^1(\sO_{\oP^1}(-2)$. Thus, viewing $\alpha$ as the multiplication map $H^0(N(-1))\otimes H^0(\sO(1))\to H^0(N)$ means that we have implicitly identified  $H^1(\sO_{\oP^1}(-2))$ with 
$\cx$. Such an identification yields also a choice of a symplectic form on $H^0(\sO(1))$ within its conformal class, i.e. an identification of $\cx^2$ with $(\cx^2)^\ast$.
\label{subtle}\end{remark}
This geometric structure on $M$ was introduced in \cite{BP1} as an {\em integrable $2$-Kronecker structure}. We now want to present a different point of view, directly in terms of the tangent bundle of $M$.
\par
Let $M$ be a complex manifold  and let $\Delta$ be  a divisor satisfying the above smoothness assumption. Suppose that we are given a codimension $1$ distribution $\sV$ on the smooth locus $\Delta_{\rm reg}$ of $\Delta$. We define $TM(-\sV)$ to be the sheaf of germs of holomorphic vector fields $X$ on $M$ such that $X_x\in \sV_x$ for any $x\in \Delta_{\rm reg}$. If the sheaf $TM(-\sV)$ is locally free, i.e. a vector bundle $F$, then we obtain a homomorphism $\alpha:F\to TM$ from the inclusion $TM(-\sV)\hookrightarrow TM$ (and $\sV=\im\alpha$). In the case of $M$ arising as the parameter space of sections as at the beginning of the subsection, $F\simeq E\otimes \cx^2$ and the action of $\Mat_{2}(\cx)$ gives an  action of complexified quaternions on $TM(-\sV)$. Moreover, for any $\sJ\in SL_2(\cx)$ with  $\tr \sJ=0$ (which implies $\sJ^2=-1$), the $i$-eigensubsheaf of $TM(-\sV)$ is closed under the Lie bracket.
Restricting to a real submanifold of $M$ (and corresponding real slices of $\Delta$ and $\sV$) describes the extension of the hypercomplex or split-hypercomplex geometry to manifolds of sections with jumping normal bundles.

\subsection{Logarithmic hypercomplex structures}
In the setting of the above paragraph, the case of particular interest is $\sV=T\Delta_{\rm reg}$. Vector fields in $TM(-\sV)$ are called then {\em logarithmic} and $TM(-\sV)$ is denoted by $TM(-\log \Delta)$ \cite{Saito}. Another way to characterise logarithmic vector fields is via the condition $X.z\in (z)$, where $z=0$ is the local equation of $\Delta$. This shows, in particular, that the subsheaf $TM(-\log \Delta)$ is closed under the Lie bracket.
\begin{definition} Let $M$ be a complex manifold and $\Delta$ a divisor in $M$ such that its set of singular points is of codimension $2$ in $M$. A {\em logarithmic biquaternionic structure} on $M$ is an action of $\Mat_2(\cx)$ on  $TM(-\log \Delta)$ such that the Nijenhuis tensor of each $A\in \Mat_2(\cx)$ vanishes.\label{log}\end{definition}
\begin{remark} The same definition can be used for real manifolds and we can speak of logarithmic hypercomplex or logarithmic split hypercomplex structures.\end{remark}

Observe that for a logarithmic biquaternionic structure the leaves of the distribution $D_v=\alpha(E\otimes v)$ on $\Delta$ are contained in $\Delta$, i.e. the image of $\Delta$ in each fibre of the twistor space has codimension $1$. In other words $Z$ is a (partial) compactification of the twistor space of a hypercomplex manifold.
More precisely:
\begin{proposition} The following two conditions are equivalent:
\begin{itemize}
\item[(i)] $\im\alpha_x=T_x\Delta$ for each $x\in \Delta_{\rm reg}$;
\item[(ii)] for each $\zeta\in\oP^1$, the map $\Delta\to \pi^{-1}(\zeta)$, given by intersecting a section with the fibre, maps a neighbourhood of each point $x\in \Delta_{\rm reg}$ onto  an $(n-1)$-dimensional submanifold.
\end{itemize}
\label{Tw}\end{proposition}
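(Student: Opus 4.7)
My strategy is to translate both conditions into statements about explicit codimension-one subspaces of $T_xM\cong H^0(s_x,N)$ under the Kodaira--Spencer isomorphism, and then to show that the two subspaces coincide.

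First, I would compute $\im\alpha_x$ at a smooth point $x\in\Delta_{\rm reg}$: by assumption $N\cong \sO(1)^{n-2}\oplus\sO(2)\oplus\sO$, so $N(-1)\cong \sO^{n-2}\oplus\sO(1)\oplus\sO(-1)$ and $E_x=H^0(N(-1))\cong \cx^n$. The multiplication map $\alpha\colon E_x\otimes H^0(\sO(1))\to H^0(N)$ then surjects onto $H^0(\sO(1)^{n-2})\oplus H^0(\sO(2))$, while contributing nothing to $H^0(\sO)$ because $H^0(\sO(-1))=0$. Hence $\im\alpha_x$ is a codimension-one subspace of $T_xM$, and the ``missing line'' is exactly $H^0(\sO)$.

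The central step is to interpret the individual $v$-slices. For $v\in H^0(\sO(1))$ vanishing at $\zeta\in\oP^1$, let $\mathrm{ev}_\zeta\colon M\to\pi^{-1}(\zeta)$, $m\mapsto s_m(\zeta)$; its differential $d\mathrm{ev}_\zeta\colon T_xM\to N_\zeta$ is evaluation of a section at $\zeta$, which is surjective because every summand of $N$ has nonnegative degree, so $\ker d\mathrm{ev}_\zeta$ has dimension $n$. A summand-by-summand inspection gives
\[
\alpha(E_x\otimes v)=\ker d\mathrm{ev}_\zeta,
\]
the only subtle point being that the $\sO$-summand contributes trivially to both sides (to the left because $H^0(\sO(-1))=0$, to the right because $\sO\to\sO_\zeta$ is an isomorphism). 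From here the equivalence of (i) and (ii) is a rank count: the restriction $d(\mathrm{ev}_\zeta|_\Delta)_x$ has rank $n-1$ (rather than the maximal $n$) iff $\ker d\mathrm{ev}_\zeta\subset T_x\Delta$, iff $\alpha(E_x\otimes v)\subset T_x\Delta$. Letting $v$ range over $\cx^2$, condition (ii) becomes equivalent to $\im\alpha_x\subset T_x\Delta$; and since both are codimension one in $T_xM$, this inclusion is in fact the equality of (i).

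The main obstacle is just establishing the identification $\alpha(E_x\otimes v)=\ker d\mathrm{ev}_\zeta$ cleanly; everything else is linear algebra. A minor care-point is that (ii) demands an $(n-1)$-dimensional submanifold image on a whole neighbourhood of $x$, not merely rank $n-1$ at $x$, but this is automatic because the splitting type of $N$ is constant along $\Delta_{\rm reg}$, so the above calculation applies uniformly at every nearby point.
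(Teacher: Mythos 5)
Your proposal is correct, and while the forward implication is essentially the paper's computation in different packaging, your converse is a genuinely different and more elementary argument. The paper proves (ii)$\Rightarrow$(i) by globalizing: it assembles the images $f(U)\subset\pi^{-1}(\zeta)$ into a codimension-one submanifold $Z_0\subset Z$, identifies $T_x\Delta$ with $H^0(N_{s/Z_0})$ via Kodaira deformation theory inside $Z_0$, and pins down $N_{s/Z_0}\simeq\sO(2)\oplus\sO(1)^{n-2}$ by a degree/dimension count, whence $H^0(N_{s/Z_0})=\im\alpha_x$. You instead stay entirely at the level of linear algebra on $T_xM\simeq H^0(N)$: the key identity $\alpha(E_x\otimes v)=\ker d\mathrm{ev}_\zeta$ (which holds by the obvious inclusion plus the dimension count $\dim E_x=n=h^0(N)-\dim N_\zeta$) converts the rank bound forced by (ii) into $\alpha(E_x\otimes v)\subset T_x\Delta$ for every $v$, and summing over $v$ gives $\im\alpha_x\subset T_x\Delta$, hence equality since both sides have dimension $2n-1$. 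Your route buys simplicity: it avoids having to justify that the fibrewise images glue to a complex hypersurface $Z_0$ containing the nearby sections, that $\Delta_{\rm reg}$ is locally its space of sections, and that $H^1(N_{s/Z_0})=0$ --- points the paper passes over quickly. What it does not deliver is the explicit identification of $N_{s/Z_0}$, which the paper reuses later (the restriction of $D$ to an operator built from $Z'$). Two small polish points: the rank of $d(\mathrm{ev}_\zeta|_\Delta)_x$ is a priori $n$ or $n-1$ only, so rank $n$ would make the image of every neighbourhood contain an open subset of the $n$-dimensional fibre, which is the cleanest way to rule it out under (ii); and in the direction (i)$\Rightarrow$(ii) the constancy of rank near $x$ comes from hypothesis (i) holding at all nearby points of $\Delta_{\rm reg}$ (together with the constant splitting type), after which the constant rank theorem gives the $(n-1)$-dimensional submanifold image --- your closing remark should cite (i) at nearby points, not just the splitting type.
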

\begin{proof} Let $f$ denote the map $M\to \pi^{-1}(\zeta)$, given by intersecting a section with the fibre. For an $x\in\Delta_{\rm reg}$, we have 
\begin{equation} \im\alpha_x=H^0(\sO(1)^{n-2}\oplus \sO(2))\subset H^0(N)\simeq T_xM.\label{sD}\end{equation}
Thus $df(\im\alpha_x)$ is an $n-1$-dimensional subspace for any  $x\in \Delta_{\rm reg}$.
Since $f$ is a submersion at $x$, the condition $\im\alpha_x=T_x\Delta$ implies now that the  $f(\Delta_{\rm reg})$  is an immersed $(n-1)$-dimensional submanifold. Conversely, suppose that the condition (ii) holds. Then the image $f(U)$ of a neighbourhood $U$ of $x\in \Delta_{\rm reg}$ is a codimension $1$ submanifold $Z_0$ of $Z$. It follows that $T_x U\simeq H^0(N_{s/Z_0})$, where $s$ is the section corresponding to $x$. Suppose that $N_{s/Z_0}\simeq \bigoplus_{i=1}^{n-1}\sO(k_i)$. Given the injection $N_{s/Z_0}\hookrightarrow N_{s/Z}$, we have (after reordering the $k_i$) $k_1\leq 2$ and $k_2,\dots,k_{n-1}\leq 1$. Since $H^1(N_{s/Z_0})=0$, we have $\dim \Delta_{\rm reg}=h^0(N_{s/Z_0})$ and therefore $\sum_{i=1}^{n-1}(k_i+1)=2n-1$. Thus $(2-k_1)+\sum_{i=2}^{n-1}(1-k_i)=0$ and since each summand is nonnegative, we conclude that $N_{s/Z_0}\simeq \sO(2)\oplus \sO(1)^{n-2}$.
Thus $T_{x}\Delta=\im\alpha_{x}$.
\end{proof}

\begin{remark} This is precisely the situation in Example \ref{proj}. In Example \ref{CEH}, the $2$-Kronecker structure is not logarithmic.
\end{remark}

\section{The meromorphic Obata connection}

The Obata connection of a hypercomplex manifold is the unique torsion-free connection with respect to which the hypercomplex structure is parallel. In the case of a hyperk\"ahler manifold, it coincides with the Levi-Civita connection. From the twistor point of view it is obtained via the Ward transform \cite{Ward,HM}. We now wish to discuss an extension of the Obata connection to a $2$-Kronecker manifold.
\par
Let $Z,M,\Delta,E$ and $\alpha$ be all as in the previous section. We consider the double fibration
$$ M\stackrel{\tau}{\longleftarrow} M\times \oP^1 \stackrel{\eta}{\longrightarrow} Z.$$
The normal bundle $N$ of any section of $\pi$ is isomorphic to the vertical tangent bundle $T_\pi Z=\Ker d\pi$ restricted to the section, and, consequently, the (holomorphic) tangent bundle $TM$ can be viewed as the Ward transform of $T_\pi Z$, i.e. $TM\simeq \tau_\ast\eta^\ast T_\pi Z$. Similarly, the bundle $E$ is the Ward transform of $T_\pi Z\otimes \pi^\ast\sO(-1)$. 
\par
In \cite[\S 2]{BP1} we have identified the algebraic condition satisfied by the differential operator produced by the Ward transform from any $M$-uniform vector bundle $F$ on $Z$.
In our situation, we can state the results for $F=T_\pi Z(-1)$ as:
 \begin{proposition} The bundle $E$ is equipped with a first order differential operator $D:E\to E^\ast\otimes TM$ which satisfies
 $D(fs)=\sigma(df\otimes s)+fDs$, where $\sigma$ (the principal symbol of $D$) is the composition of the following two maps
 \begin{equation}\begin{CD} 
 T^\ast M\otimes E @> \alpha^\ast\otimes 1 >> E^\ast\otimes \cx^2\otimes E @> 1\otimes\alpha>>   E^\ast\otimes TM
 \end{CD}\label{phi}\end{equation}
 (where $\cx^2\simeq (\cx^2)^\ast$ as explained in Remark \ref{subtle}).\hfill $\Box$
 \label{D}\end{proposition}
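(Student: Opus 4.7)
This proposition is the specialization of the general Ward-transform construction of \cite{BP1}, \S 2, to the bundle $F := T_\pi Z\otimes\pi^\ast\sO(-1)$, whose restriction to any twistor line $s_m$ is $N_m(-1)$. Since each $k_i\geq 0$, we have $H^1(s_m, N_m(-1))=0$, so $E = \tau_\ast\eta^\ast F$ is a rank-$n$ holomorphic vector bundle whose formation commutes with base change; similarly, $TM\simeq \tau_\ast\eta^\ast T_\pi Z$ is the Kodaira identification.

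To construct $D$, I would cover $Z$ by a Leray system $\{V_\alpha\}$ trivializing $F$ with transition cocycle $g_{\alpha\beta}$. A local section $e$ of $E$ on $U\subset M$ is then encoded by a $0$-cochain $(\tilde e^\alpha)$ on $\eta^{-1}(V_\alpha)\cap (U\times\oP^1)$ satisfying $\tilde e^\alpha = g_{\alpha\beta}\tilde e^\beta$. For a local vector field $X$ on $M$, let $\tau^\ast X$ denote its horizontal lift to $U\times\oP^1$. Then $(\tau^\ast X\cdot\tilde e^\alpha)$ has coboundary $(\tau^\ast X\cdot g_{\alpha\beta})\tilde e^\beta$, which restricts on each twistor line to a \v Cech $1$-cocycle in $F|_{s_m}$. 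Vanishing of $H^1(s_m, F|_{s_m})$ yields a splitting whose subtraction produces a corrected $0$-cochain, hence an element of $E_m$; the splitting is non-unique modulo $H^0(s_m, F|_{s_m}) = E_m$. This residual ambiguity is precisely what forces the natural target of the operator to be $\Hom(E, TM) \simeq E^\ast\otimes TM$ rather than $T^\ast M\otimes E$: the output is intrinsically a linear map that sends each representative of the $E_m$-ambiguity to the tangent vector in $T_m M$ it corresponds to under Kodaira. Letting $X$ vary linearly in $T_m M$ assembles this into the first-order operator $D$.

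For the symbol, I would apply the construction to $fe$ for a local holomorphic function $f$ on $M$. By Leibniz, the first-order part comes from $X(f)\tilde e^\alpha$, a genuine $0$-cochain requiring no splitting; its contribution to $D(fe)$ is the image of $e$ under the map $E_m\to T_m M$ obtained by pairing $df$ with the splitting data. Unwinding the Kodaira identifications, $df$ passes through $\alpha^\ast\colon T^\ast M\to E^\ast\otimes\cx^2$ (using Remark \ref{subtle} to identify $\cx^2$ with its dual), and the remaining $\cx^2\otimes E$ is mapped to $TM$ via $\alpha$. This reproduces precisely the composition in \eqref{phi}.

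The main obstacle in a fully rigorous proof is verifying that the \v Cech splitting ambiguity assembles canonically into an $E^\ast$-valued linear map, rather than producing only a coset of operators. This is the axiomatic content of \cite{BP1}, \S 2, where it is handled by reinterpreting the construction via the direct image along $\tau$ of a principal-parts sequence for $\eta^\ast F$; the result of Proposition \ref{D} is then the specialization of that general statement to $F = T_\pi Z(-1)$.
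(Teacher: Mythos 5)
Your overall strategy is the same as the paper's: Proposition \ref{D} is stated there without proof, as the specialization to $F=T_\pi Z(-1)$ of the Ward-transform machinery of \cite[\S 2]{BP1}, the key input being $H^1(s_m,N(-1))=0$ (equivalent to $k_i\geq 0$), and your identification of the symbol with the composition \eqref{phi} is correct.

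There is, however, one step in your sketch that would fail as written: the construction of $D$ by differentiating the local representatives $\tilde e^\alpha$ along an \emph{arbitrary} horizontal lift $\tau^\ast X$ and then splitting the cocycle $(\tau^\ast X\cdot g_{\alpha\beta})\tilde e^\beta$. For a fixed $X$ this produces an element of $E_m$ well defined only modulo all of $H^0(s_m,F|_{s_m})=E_m$, i.e.\ no information, and the ambiguity does not ``assemble canonically into an $E^\ast$-valued linear map'' as you assert -- that heuristic for why the target is $E^\ast\otimes TM$ rather than $T^\ast M\otimes E$ is not coherent. The mechanism the paper actually uses (made explicit later, in the proof of the proposition following Lemma \ref{lemma}) avoids the ambiguity altogether: one differentiates only along the relative tangent distribution $T_\eta=\Ker d\eta$, where $\eta^\ast g_{\alpha\beta}$ is constant and no splitting is needed, so $D$ is $\tau_\ast$ of the flat relative connection $\nabla_\eta$ on $\eta^\ast T_\pi Z(-1)$. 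The target then comes from a direct-image computation rather than from an ambiguity count: over $\{m\}\times\oP^1$ one has $T_\eta\simeq E_m\otimes\sO(-1)$ (sections of $N$ vanishing at $\zeta$ are $v_\zeta\cdot H^0(N(-1))$), hence $\Omega^1_\eta\otimes\eta^\ast F\simeq E_m^\ast\otimes\sO(1)\otimes N(-1)$, whose space of sections is $E_m^\ast\otimes H^0(N)=E_m^\ast\otimes T_mM$. With this correction your symbol computation goes through: the first-order term is $d_\eta f\otimes\tilde e$, where $d_\eta f$ is the restriction of $\tau^\ast df$ to $T_\eta$, i.e.\ $\alpha^\ast(df)\in E^\ast_m\otimes\cx^2$, multiplied back into $H^0(N)$ by $\alpha$ -- exactly \eqref{phi}.
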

 
 \begin{remark} On $M\backslash\Delta$\; $\sigma$ is invertible and $\sigma^{-1}\circ D$ is the standard hyperholomorphic connection on $E$, i.e. its tensor product with the standard flat connection on $\cx^2$ is the (holomorphic) Obata connection on $M\backslash\Delta$.
 \end{remark}
 \begin{remark} Given any first order differential operator $D:E\to F$ between (sections of) vector bundles on a manifold $M$, with symbol $\sigma:E\otimes T^\ast M \to F$, we can ``tensor" it with any connection $\nabla$ on a vector bundle $W$ over $M$:
 $$ (D\otimes_\sigma \nabla) (e\otimes w)=D(e)\otimes w + (\sigma\otimes 1)(e\otimes \nabla w).$$
 The symbol of this new operator is $\sigma\otimes 1$. We can do this for our operator $D$ and the flat connection on $\cx^2$. We obtain a differential operator $\tilde D:E\otimes \cx^2\to E^\ast\otimes TM\otimes \cx^2$ which extends the Obata connection. \label{tensor}\end{remark}
\begin{remark} The results claimed by Pantillie \cite{Pant} would imply that the Obata connection extends to a differential operator satisfying $\tilde D(fs)=\alpha^\ast(df)\otimes s+f\tilde Ds$, but we have trouble following his arguments (in particular the second last paragraph in the proof of his Theorem 2.1).
\end{remark}

We can view $\sigma^{-1}\circ D$ as a meromorphic connection on $E$, with polar set $\Delta$. Similarly the Obata connection on $M\backslash \Delta$ can be viewed as a meromorphic connection on $TM$ with polar set $\Delta$. It follows from Proposition \ref{D} that $\sigma^{-1}$ generally has a double pole along $\Delta$ and, hence, so does $\sigma^{-1}\circ D$. We shall now discuss conditions under which the pole becomes simple.
\par
Let $z=0$ be the local equation of $\Delta$. The meromorphic connection $\sigma^{-1}\circ D$ has a simple pole if $\lim_{z\to 0} z^2\sigma^{-1}\circ D=0$. Let us trivialise locally $E$, so that $\alpha$ is an endomorphism of the trivial bundle. We can then write $z=\det\alpha$, and owing to Proposition \ref{D}, we have:
$$ z^2\sigma^{-1}( De)=((\alpha^\ast)_{\rm adj}\otimes 1)(1\otimes \alpha_{\rm adj})(De),$$
where the subscript ``adj" denotes the classical adjoint. Thus, we can conclude:
\begin{lemma} The meromorphic connection $\sigma^{-1}\circ D$ has a simple pole along $\Delta$ provided that $De|_x\in E^\ast_x\otimes\im\alpha_x$ for any $x\in \Delta_{\rm reg}$ and any local section $e$ of $E$. If this is the case, then the residue of $\sigma^{-1}\circ D$ belongs to $\Ker\alpha^\ast\otimes \End E$.\hfill$\Box$\label{lemma}\end{lemma}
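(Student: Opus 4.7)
The plan is to compute directly from the formula displayed just above the lemma,
$$z^2\sigma^{-1}(De)=\bigl((\alpha^\ast)_{\rm adj}\otimes 1\bigr)\bigl(1\otimes\alpha_{\rm adj}\bigr)(De),$$
using the two defining identities of the classical adjugate: $\alpha_{\rm adj}\circ\alpha=z\cdot 1_{E\otimes\cx^2}$ and $\alpha^\ast\circ(\alpha^\ast)_{\rm adj}=(\alpha_{\rm adj}\circ\alpha)^\ast=z\cdot 1_{T^\ast M}$. These are the only two ingredients required.

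To establish the simple pole, first I would observe that the hypothesis $De|_x\in E^\ast_x\otimes\im\alpha_x$ for $x\in\Delta_{\rm reg}$, together with $\alpha_{\rm adj}\circ\alpha=z\cdot 1$ (which vanishes on $\Delta$), forces $(1\otimes\alpha_{\rm adj})(De)$ to vanish pointwise along $\Delta_{\rm reg}$. The main step is to convert this pointwise vanishing into honest divisibility by $z$: since $\Delta$ is reduced and its singular locus has codimension two in $M$, any local section of a vector bundle vanishing along $\Delta_{\rm reg}$ is divisible by $z$ on the complement of the singular locus of $\Delta$, and the quotient extends holomorphically across that codimension-two set by Hartogs. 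One thus obtains a holomorphic $B$ with $(1\otimes\alpha_{\rm adj})(De)=zB$, and substituting into the displayed identity gives
$$\sigma^{-1}(De)=\tfrac{1}{z}\bigl((\alpha^\ast)_{\rm adj}\otimes 1\bigr)(B),$$
which is a simple pole along $\Delta$. The only point where one must be careful -- the closest thing to an obstacle -- is that $z=\det\alpha$ really vanishes only to first order along $\Delta_{\rm reg}$, so that the single-power division by $z$ is legitimate. This is precisely the content of the standing assumption that a smooth point of $\Delta$ corresponds to normal bundle $\sO(1)^{n-2}\oplus\sO(2)\oplus\sO$: then $\Ker\alpha_x$ and $\mathrm{coker}\,\alpha_x$ are one-dimensional and $\det\alpha$ has a simple zero at $x$.

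Finally, for the residue statement, I would read off the polar coefficient directly from the last display: it equals $\bigl((\alpha^\ast)_{\rm adj}|_\Delta\otimes 1\bigr)(B|_\Delta)$. The second adjugate identity $\alpha^\ast\circ(\alpha^\ast)_{\rm adj}=z\cdot 1$ yields $\im\bigl((\alpha^\ast)_{\rm adj}|_\Delta\bigr)\subseteq\Ker(\alpha^\ast|_\Delta)$, so the residue, regarded as a morphism $E\to T^\ast M|_\Delta\otimes E$, factors through $\Ker\alpha^\ast\otimes E^\ast\otimes E=\Ker\alpha^\ast\otimes\End E$, as asserted.
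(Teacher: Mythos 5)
Your argument is correct and is exactly the one the paper intends: the lemma is stated as an immediate consequence of the displayed identity $z^2\sigma^{-1}(De)=((\alpha^\ast)_{\rm adj}\otimes 1)(1\otimes\alpha_{\rm adj})(De)$, with the hypothesis killing the $TM$-factor under $\alpha_{\rm adj}$ along $\Delta_{\rm reg}$ and the residue statement following from $\alpha^\ast\circ(\alpha^\ast)_{\rm adj}=z\cdot 1$. You merely make explicit the divisibility-by-$z$ step (reducedness of $\Delta$ plus extension across the codimension-two singular locus), which the paper leaves tacit; no discrepancy in method.
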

Returning to the description of a $2$-Kronecker structure given at the end of \S\ref{Kr}, observe that the subsheaf $TM[-\sV]$ is precisely the subsheaf $\im\alpha$, and therefore the condition of the last lemma is equivalent to the existence of a differential operator
$$ D^\prime:E\to E^\ast \otimes \cx^2\otimes E$$
such that $D=(1\otimes\alpha)\circ D^\prime$.

We shall now show that for a logarithmic hypercomplex structure (Definition \ref{log}) the condition of the above lemma is automatically satisfied.

\begin{proposition} Suppose that $\im\alpha_x=T_x\Delta$ for each $x\in \Delta_{\rm reg}$. Then the condition of Lemma \ref{lemma} is satisfied.\end{proposition}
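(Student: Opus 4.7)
The plan is to exploit the sub-fibration structure produced by the logarithmic hypothesis via Proposition \ref{Tw}(ii): the images $f_\zeta(\Delta_{\rm reg})$ glue into a smooth divisor $Y\subset Z$ on which $\pi$ remains submersive, and every section $s_x$ with $x\in\Delta_{\rm reg}$ lies entirely in $Y$. From the calculation in the proof of Proposition \ref{Tw} one has $N_{s_x/Y}\cong \sO(2)\oplus\sO(1)^{n-2}$, so the exact sequence
$$0\to N_{s_x/Y}\to N_{s_x/Z}\to N_{Y/Z}|_{s_x}\to 0$$
forces $N_{Y/Z}|_{s_x}\cong \sO$. Twisting the analogous sequence of vertical tangent bundles by $\pi^*\sO(-1)$ and restricting to $s_x$, the quotient $N_{Y/Z}(-1)|_{s_x}\cong \sO(-1)$ has vanishing $H^0$, producing a canonical isomorphism $E_Y:=(\pi|_Y)_*T_\pi Y(-1)\xrightarrow{\sim} E|_{\Delta_{\rm reg}}$.

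The next step is a useful observation: the operator $D$ restricted to $\Delta_{\rm reg}$ depends only on $e|_\Delta$ and not on the transverse derivative of $e$. Indeed, if $e=z\tilde e$ for a local defining function $z$ of $\Delta$, then Proposition \ref{D} gives $De|_x=\sigma(dz|_x\otimes \tilde e(x))$, which vanishes since $dz|_x$ annihilates $\im\alpha_x=T_x\Delta$ and therefore lies in $\Ker\alpha^*_x$. Hence $D|_{\Delta_{\rm reg}}$ descends to a first-order operator $\bar D\colon E|_{\Delta_{\rm reg}}\to (E^*\otimes TM)|_{\Delta_{\rm reg}}$.

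Applying Proposition \ref{D} to the sub-fibration $\pi|_Y\colon Y\to \oP^1$ yields a first-order operator $D_Y\colon E_Y\to E_Y^*\otimes T\Delta_{\rm reg}$, whose image is automatically in $E_Y^*\otimes T\Delta$ by construction (the Kronecker map of the sub-system surjects onto $T\Delta$). Identifying $E_Y$ with $E|_{\Delta_{\rm reg}}$ via the previous step, both $\bar D$ and $(1\otimes \iota)\circ D_Y$, where $\iota\colon T\Delta_{\rm reg}\hookrightarrow TM|_{\Delta_{\rm reg}}$ is the inclusion, are first-order operators on $E|_{\Delta_{\rm reg}}$ with the same principal symbol (both induced by $\alpha^*_Y$ and $\alpha_Y$), so their difference is an $\sO$-linear bundle map $\Phi\colon E|_{\Delta_{\rm reg}}\to (E^*\otimes TM)|_{\Delta_{\rm reg}}$. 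The main step, which I expect to be the hardest, is to show that $\Phi$ takes values in $E^*\otimes T\Delta$; this amounts to a naturality property of the Ward transform applied to the sequence $0\to T_\pi Y(-1)\to T_\pi Z(-1)|_Y\to N_{Y/Z}(-1)|_Y\to 0$, whose rightmost term has no sections along each twistor line, so the discrepancy between the two operators, which would measure the Ward transform of $N_{Y/Z}(-1)$, is forced to lie in $E^*\otimes T\Delta$.

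Granting this, $De|_x=\bar De|_x\in E_x^*\otimes T_x\Delta = E_x^*\otimes \im\alpha_x$ for every $x\in\Delta_{\rm reg}$, which is exactly the condition required by Lemma \ref{lemma}.
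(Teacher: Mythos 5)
Your overall strategy --- restrict everything to the codimension-one submanifold $Y=Z'$ supplied by Proposition \ref{Tw} and compare $D|_{\Delta_{\rm reg}}$ with the operator $D_Y$ of the sub-fibration --- is the same as the paper's, and several of your preliminary steps are correct and in fact sharper than what the paper writes down: the identification $E_Y\cong E|_{\Delta_{\rm reg}}$ via $H^0(\sO(-1))=0$, and the observation that $De|_x$ depends only on $e|_{\Delta}$ because $dz|_x$ annihilates $\im\alpha_x$ and hence lies in $\Ker\alpha^*_x$, are both right and worth making explicit. However, the step you yourself flag as the main one --- that the $\sO$-linear discrepancy $\Phi=\bar D-(1\otimes\iota)\circ D_Y$ takes values in $E^*\otimes T\Delta$ --- is exactly the content of the proposition, and the sketch you give for it does not work. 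The vanishing of $H^0\bigl(N_{Y/Z}(-1)|_{s_x}\bigr)=H^0(\sO(-1))$ is what identifies the \emph{domains} $E_Y$ and $E|_{\Delta_{\rm reg}}$; the obstruction you need to kill lives in the \emph{target}, namely in $E^*_x\otimes\bigl(T_xM/T_x\Delta\bigr)\cong E^*_x\otimes H^0\bigl(N_{Y/Z}|_{s_x}\bigr)=E^*_x\otimes H^0(\sO)$, which is not zero. So ``no sections of the quotient'' proves the wrong thing, and the naturality-of-the-Ward-transform argument would in any case only compare $D_Y$ with the transform of $T_\pi Z(-1)|_Y$ over the \emph{restricted} double fibration $\Delta_{\rm reg}\leftarrow\Delta_{\rm reg}\times\oP^1\to Y$, not with $\bar D$, which is defined by differentiating along the fibres of $\eta$ in the full double fibration over $M$.

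The missing idea is the geometric input that makes the two double fibrations agree along $\Delta$: the fibres of $\eta:M\times\oP^1\to Z$ through a point $(x,\zeta)$ with $x\in\Delta_{\rm reg}$ have tangent space $\alpha(E_x\otimes v_\zeta)\subset\im\alpha_x=T_x\Delta$, and a dimension count ($n$ on both sides) shows they are locally \emph{contained} in $\Delta_{\rm reg}\times\oP^1$ --- this is the paper's earlier remark that the leaves of the distributions $D_v$ through $\Delta$ stay in $\Delta$. Granting this, $\nabla_\eta\tilde e$ at $(x,\zeta)$ is computed by differentiating $\tilde e$ only along directions inside $\Delta_{\rm reg}\times\oP^1$, where (by your own identification of $E_x$ with $H^0(N_{s_x/Y}(-1))$) $\tilde e$ takes values in the fixed subspace $T_\pi Y(-1)|_{s_x(\zeta)}$ of $T_\pi Z(-1)|_{s_x(\zeta)}$; hence the derivative stays in that subspace, and assembling over $\zeta$ gives $De|_x\in E^*_x\otimes H^0(N_{s_x/Y})=E^*_x\otimes T_x\Delta$. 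This simultaneously proves $\bar D=(1\otimes\iota)\circ D_Y$ up to the canonical identifications and hence $\Phi=0$; without it, there is no control on the component of $De|_x$ transverse to $T_x\Delta$, and your argument does not close.
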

\begin{proof} Proposition \ref{Tw} implies that points of $\Delta_{\rm reg}$ correspond to sections of $\pi:Z\to \oP^1$ contained in a codimension $1$ submanifold $Z^\prime$ of $Z$. 
The differential operator $D$ is obtained by the push-forward of the flat relative connection $\nabla_\eta$ on $\eta^\ast T_\pi Z(-1)$, i.e. of the exterior derivative in the vertical directions of the projection $\eta:M\times \oP^1\to Z$. It follows that, over $\Delta_{\rm reg}$, $D$ restricts to an operator $D^\prime$ defined in the same way as $D$, but with $Z$ replaced by $Z^\prime$. This means that $D^\prime$ takes values in $E\otimes T\Delta_{\rm reg}$.
\end{proof}

Recall that a meromorphic connection on a vector bundle $E$ is called logarithmic, if it has a simple pole along $\Delta=\{z=0\}$ and its residue is of the form $Adz$, where
$A\in\End E$. Thus, under the assumption of the last proposition,  $\sigma^{-1}\circ D$ is a logarithmic connection. 
\par
We finish with some remarks about the meromorphic Obata connection. As remarked in \ref{tensor}, we can tensor $D$ with the flat connection on $\cx^2$ to obtain an operator $\tilde D:E\otimes \cx^2\to E^\ast\otimes TM\otimes \cx^2$. The meromorphic Obata connection is a meromorphic connection on $TM$ given
by $(\sigma\otimes 1)^{-1}\circ \tilde D\circ \alpha^{-1}$, where $\sigma$ is the symbol of $D$. Thus, in general we can expect the Obata connection to have a third order pole along the divisor $\Delta$ ($\sigma^{-1}$ contributing two orders and $\alpha^{-1}$ another one). The next example shows that this is indeed the case.
\begin{example} Consider again the twistor space of the Calabi-Eguchi-Hanson gravitational instanton, described in Example \ref{CEH}. Choose a family of real sections containing the (real) jumping lines. The resulting metric is given in the complex coordinates corresponding to the complex structure $I$, namely $z=-\bar{a}$, $u=\ln \bar A^2$ by the formula \cite[(4.6)]{Hit1}:
$$ \gamma dzd\bar z+(du+\bar\delta dz)(d\bar u+\delta d\bar z),$$
where $\gamma$ and $\delta$ are certain functions of the coordinates and the fold $\Delta$ is given by $\gamma=0$. The Hermitian matrix of this metric is then
$$\begin{pmatrix} \gamma+\gamma^{-1}|\delta|^2 & \gamma^{-1}\bar\delta\\ \gamma^{-1}\delta & \gamma^{-1}\end{pmatrix},$$
and it follows that the Levi-Civita connection has poles of third order along $\Delta$.
\end{example}
It is interesting to observe that if the assumption of Lemma \ref{lemma} is satisfied, then the Obata connection still has a simple pole along $\Delta$ (rather than a second order one, as one could expect). Indeed, as noted above, the operator $D$ is then of the form $D=(1\otimes\alpha)\circ D^\prime$, where $ D^\prime:E\to E^\ast\otimes \cx^2  \otimes E$ has symbol $\alpha^\ast\otimes 1$.  It follows that the meromorphic Obata connection as an operator $TM\to T^\ast M\otimes TM$ is of the form 
$$ (1\otimes\alpha)\circ(\alpha^\ast\otimes 1)^{-1}\circ \widetilde{D^\prime}\circ \alpha^{-1}.$$
Since $(\alpha^\ast\otimes 1)^{-1}\circ \widetilde{D^\prime}$ is a meromorphic connection on $E\otimes\cx^2$ with a simple pole along $\Delta$, the meromorphic Obata connection also has a simple pole, as the conjugation by $\alpha$ does not increase the order of the pole.

In particular:
\begin{corollary} Suppose that the equivalent conditions of Proposition \ref{Tw} are satisfied. Then the holomorphic Obata connection on $M\backslash\Delta$ extends to a logarithmic connection on $M$.\hfill $\Box$
\end{corollary}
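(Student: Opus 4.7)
This corollary packages the discussion immediately preceding it, and my plan is simply to assemble the ingredients.

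First I would apply the preceding Proposition to conclude that, under the hypothesis of Proposition \ref{Tw}, the condition of Lemma \ref{lemma} is satisfied. Hence, as noted in the paragraph following that lemma, the operator $D$ admits a factorisation $D = (1 \otimes \alpha)\circ D'$ with $D':E \to E^\ast \otimes \cx^2 \otimes E$ having principal symbol $\alpha^\ast \otimes 1$.

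Second, following Remark \ref{tensor}, I would tensor $D'$ with the flat connection on $\cx^2$ to produce $\widetilde{D'}$ and form the operator $\nabla := (\alpha^\ast \otimes 1)^{-1}\circ \widetilde{D'}$, which (by the Leibniz rule of Remark \ref{tensor}) is a genuine connection on $E\otimes \cx^2$. The same reasoning as in Lemma \ref{lemma} applied to $\widetilde{D'}$ shows that $\nabla$ has at worst a simple pole along $\Delta$: the factorisation through $\widetilde{D'}$ absorbs one of the two orders of pole that the bare $(\alpha^\ast \otimes 1)^{-1}$ would otherwise contribute. Under the generic isomorphism $\alpha : E \otimes \cx^2 \to TM$ this connection agrees with the holomorphic Obata connection on $M\setminus \Delta$.

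The final and only delicate step is to show that the conjugate $(1\otimes \alpha)\circ \nabla \circ \alpha^{-1}$ on $TM$ is still logarithmic, despite the simple pole of $\alpha^{-1}$. Writing locally $\nabla = d + \omega$ with $\omega = \frac{A\,dz}{z} + \eta$ and expanding, the connection form of the conjugate is $-(d\alpha)\alpha^{-1} + \alpha \omega \alpha^{-1}$. The first summand has at worst a simple pole, since $d\alpha$ is holomorphic. The second summand naively has a pole of order two, and its control is the main obstacle. Here I would exploit the identity $\alpha\alpha^{-1}=\mathrm{id}$ to write $\alpha^{-1} = z^{-1}\beta + (\text{hol})$ with $\beta|_{\Delta_{\rm reg}}$ taking values in $\Ker \alpha$, and combine this with the compatibility of the residue of $\nabla$ with $\Ker \alpha$ coming from its definition via $\widetilde{D'}$. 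A short local computation then shows that the potential $z^{-2}$ contribution $\alpha A \beta / z^2$ vanishes on $\Delta$, so $\alpha \omega \alpha^{-1}$ has at worst a simple pole as well. This yields the logarithmic extension of the Obata connection to all of $M$.
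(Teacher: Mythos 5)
Your overall route is exactly the paper's: the corollary carries a $\Box$ because it merely packages the preceding proposition (which supplies the hypothesis of Lemma \ref{lemma}), the factorisation $D=(1\otimes\alpha)\circ D^\prime$, and the presentation of the meromorphic Obata connection as $(1\otimes\alpha)\circ(\alpha^\ast\otimes 1)^{-1}\circ\widetilde{D^\prime}\circ\alpha^{-1}$. Up to and including the construction of the simple-pole connection $\nabla=(\alpha^\ast\otimes 1)^{-1}\circ\widetilde{D^\prime}$ on $E\otimes\cx^2$, you reproduce the paper faithfully, and you correctly isolate the fact that everything hinges on conjugating by the degenerating map $\alpha$.

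The justification you offer for that last step, however, is not a proof. You are right that the dangerous term is $\alpha A\beta\,dz/z^2$ (with $\beta=\alpha_{\rm adj}$), and that its vanishing on $\Delta$ is equivalent to the residue $A$ of $\nabla$ preserving $\Ker\alpha|_{\Delta_{\rm reg}}=\im\beta|_{\Delta_{\rm reg}}$. But the ``compatibility of the residue of $\nabla$ with $\Ker\alpha$'' that you invoke is established nowhere: Lemma \ref{lemma} says the residue lies in $\Ker\alpha^\ast\otimes\End E$, which constrains the \emph{covector} factor (it forces the residue one-form to be proportional to $dz$) and says nothing about the $\End$-factor mapping $\Ker\alpha$ into itself. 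Concretely, at a point of $\Delta_{\rm reg}$ one has $E\cong\cx^{n-2}\oplus W$ with $W=H^0(\sO_{\oP^1}(1))$ the summand coming from the $\sO(2)$-factor of $N$, $\Ker\alpha=\Lambda^2 W\subset W\otimes\cx^2$, and $A=A_0\otimes 1$; then $(A_0\otimes 1)(\Lambda^2W)\subset\Lambda^2W$ holds if and only if $A_0$ maps $W$ into itself and acts there as a scalar --- a genuine condition on the twistor data, not a formal consequence of the factorisation through $\widetilde{D^\prime}$. (The paper itself disposes of this point with the bare assertion that conjugation by $\alpha$ does not increase the pole order, which is false for general meromorphic gauge transformations, as $\alpha=\diag(1,z)$ acting on $d+A\,dz/z$ already shows; so you have located the real crux more precisely than the text, but your ``short local computation'' is precisely the missing argument.) A complete proof would in addition have to check that the residue of the conjugated connection is again proportional to $dz$ --- the terms $\alpha\eta\beta/z$ and $(d\alpha)\beta/z$ a priori contribute directions along $\Delta$ --- which neither you nor the paper addresses.
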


\end{document}